\documentclass[11pt,reqno]{amsart}
\usepackage{amsmath}
\usepackage{amssymb}
\usepackage{amsthm}
\usepackage{enumerate}
\usepackage[usenames]{color}
\usepackage{eepic,epic}
\usepackage{url} 
\usepackage{verbatim}
\usepackage{epstopdf}
\usepackage{subfig}
\usepackage{multirow}
\usepackage{array}

\usepackage[utf8]{inputenc}

\usepackage{makecell}
\usepackage[normalem]{ulem}
\usepackage[dvipsnames]{xcolor}
\usepackage{algorithm,algpseudocode}
\usepackage{graphicx}
\usepackage{tikz, xcolor}

%\usepackage{epsfig}
%\usepackage{latexcad}

% PAGE SETTING ---------------------------------------
\textheight 22.5  true cm
\textwidth 15 true cm
\voffset -1.0 true cm
\hoffset -1.0 true cm
\marginparwidth= 2 true cm

% THEOREMS -------------------------------------------

\newtheorem{thm}{Theorem}[section]

\newtheorem{ass}{Assumption}

\theoremstyle{definition}
\newtheorem{defn}[thm]{Definition}

\theoremstyle{remark}

\numberwithin{equation}{section}

% MATH ------------------------------------------------

\captionsetup{font=footnotesize}

%-----------------------------------------------------------------

% ----------------------------------------------------------------
\begin{document}
\title[]
{Adaptive consensus with exponential decay} 
\author{Woocheol Choi, Piljae Jang \\
(Sungkyunkwan University, Mathematics)}
\address{}
\email{choiwc@skku.edu}
\email{ }

\maketitle

\begin{abstract}
This paper addresses the adaptive consensus problem in uncertain multi-agent systems, particularly under challenges posed by quantized communication. We consider agents with general linear dynamics subject to nonlinear uncertainties and propose an adaptive consensus control framework that integrates concurrent learning. Unlike traditional methods relying solely on instantaneous data, concurrent learning leverages stored historical data to enhance parameter estimation without requiring persistent excitation. We establish that the proposed controller ensures exponential convergence of both consensus and parameter estimation. Furthermore, we extend the analysis to scenarios where inter-agent communication is quantized using a uniform quantizer. We prove that the system still achieves consensus up to an error proportional to the quantization level, with exponential convergence rate.

\end{abstract}

\section{Introduction}\label{section1}
Distributed control has recently gained significant attention across various fields. In areas like autonomous vehicles, robotic networks, biochemical networks, and social networks, it enables coordinated behavior among multiple agents \cite{CS}.% In distributed control, interactions among agents can be set in either undirected or directed settings \cite{4118472}\cite{1333204}. In undirected settings, where interactions are symmetric, all agents communicate equally, which simplifies achieving consensus. %In contrast, directed settings involve asymmetric interactions with challenges like leader-following, where agents follow designated leaders, and leaderless scenarios, where all agents aim to achieve consensus without predefined hierarchies. 

An important problem in the study of consensus control is to address the uncertainty in the dynamic of each agent. In this work, we are interested in the consensus problem for the  uncertain dynamic system involving a matched nonlinear uncertainty which was studied in the recent works \cite{YBC, MRS, YMM}. The work \cite{YBC} proposed a consensus controller updating the unknown parameters. They showed the parameter estimation and the consensus are achieved asymptotically. The work \cite{MRS} proposed a model reference adaptive controller for the consensus for the undirected graph and also modified the controller for the directed graph using a slack variable. The argument in the work \cite{MRS} for the consensus for the undirected graph case relies on the Lyapunov analysis and the Barbalet's lemma, and so the convergence rate for the consensus and the parameter estimation is missing. We also refer the work \cite{YMM} which appled a reinforcement learning approach to attain the bipartite conensus of the agents.

The aim of this work is to design an adaptive consensus control for the system considered in \cite{MRS} which is guaranteed to achieve the consensus exponentially fast. For this we make use of the concurrent learning developed by Chowdhary and Johnson \cite{CJ}. Concurrent learning leverages historical data to improve parameter estimation accuracy and control performance \cite{CJ}. Unlike conventional adaptive control, which relies solely on current data, concurrent learning stores past data points and continuously integrates them into the learning process. This approach addresses the limitations of traditional adaptive methods by significantly improving the accuracy of parameter estimates, especially when  the persistent excitation condition is difficult to maintain.

Several studies have addressed consensus problems for uncertain multi-agent systems.  We refer to \cite{YMM} which handles the dynamic system with matched uncertainty structue such that the  matrices are also unknown.  We also refer to the problem which considered multi-agent systems with linear dynamics involving unknown matrices \cite{YSHR} \cite{YBD}.   Chen et al. \cite{CLRW} introduced an adaptive consensus method using a Nussbaum-type function for systems with unknown control directions. Hou et al. \cite{HCT} proposed a decentralized robust adaptive control approach employing neural networks to manage uncertainties. Chen et al. \cite{CWLW} developed an adaptive consensus control method for nonlinear multi-agent systems with time delays, leveraging neural networks for robustness. Ma et al. \cite{MWWL} presented a neural-network-based distributed adaptive robust control scheme addressing time delays and external noises. Psillakis \cite{P} introduced a PI consensus error transformation for adaptive cooperative control of nonlinear multi-agent systems, ensuring consensus despite uncertainties. Cai and Huang \cite{CH} tackled leader-following consensus for Euler-Lagrange systems with uncertainties using an adaptive distributed observer. We also refer  \cite{ZWLA} \cite{ZZ} which make use of the neural-network for the consensus problem.

We also obtain the consensus result for the uncertain multi-agent system when the agents use the quantization for the communication. 
Quantization is important for the multi-agent system as it reduces the communication burden significantly. 
To enhance the robustness and reliability of adaptive consensus algorithms under quantized data transmission, several studies have specifically addressed the quantization problem in this context. Ishii and Tempo \cite{IT} analyzed quantized consensus algorithms, focusing on the trade-offs between quantization levels and convergence speed, and demonstrated that appropriate quantization scheme design can lead to efficient consensus despite limited communication precision. Lavaei and Murray \cite{LM} proposed a quantized consensus algorithm using the gossip approach, mitigating the effects of quantization errors and improving overall system performance. Zhang et al. \cite{ZZHW} explored leader-following consensus for both linear and Lipschitz nonlinear multi-agent systems with quantized communication, introducing an event-triggered control mechanism to enhance system efficiency and reliability under bandwidth constraints.

The rest of this paper is organized as follows. In Section 2, we introduce the perliminaries and state the main results of this paper. Section 3 is devoted to prove the exponential convergence result for the proposed consensus control. In Section 4 we study the adaptive consensus control for the multi-agent system involving the quantization for the communication. Section 5 is devoted to present numerical experiments supporting the theoretical results of this paper.

% that ma condition where system inputs sufficiently excite all modes of the system 

%Adaptive consensus algorithms, including those that incorporate quantizers, face unique challenges in maintaining performance and ensuring convergence. To our knowledge, there are no existing studies that explicitly demonstrate the convergence rate of adaptive consensus, whether quantized or not. Our work addresses this gap by integrating concurrent learning into adaptive consensus algorithms, which not only enhances robustness and reliability under various conditions, including quantized data transmission, but also allows for the demonstration of exponential stability. This is particularly noteworthy as it provides precise information about the convergence rate, a critical factor for ensuring timely and reliable system performance. By leveraging concurrent learning, our approach enables adaptive consensus algorithms to achieve more reliable convergence. This advancement broadens the applicability of adaptive consensus methods, making them more effective in a variety of dynamic and uncertain multi-agent environments.

\section{Preliminaries}\label{section2}

In this section, we state the distributed consensus problem involving uncertainty in the system of the agents. We begin with recalling the graph theory to state the problem.

\subsection{Graph theory}

Consider an undirected network represented by the graph $\mathcal{G} = (\mathcal{V}, \mathcal{E})$, where $\mathcal{V}$ denotes the set of vertices $\mathcal{V} = \{1, \ldots, n\}$, and $\mathcal{E}$ denotes the set of edges, $\mathcal{E} \subseteq \mathcal{V} \times \mathcal{V}$. An edge $(i, j) \in \mathcal{E}$ indicates a pair of agents capable of communication, and in the context of an undirected network, $(i, j) \in \mathcal{E}$ if and only if $(j,i)\in \mathcal{E}$. The neighborhood set of an agent $i$ is denoted by $\mathcal{N}_i = \{j \in \mathcal{V} \mid (i, j) \in \mathcal{E}\}$, representing the set of neighbors of agent $i$. We assume that the graph $\mathcal{G}$ to be considered connected, there must exist a path between any pair of vertices within the graph.

The adjacency matrix $\mathcal{A}(\mathcal{G}) = [a_{ij}]_{i,j=1}^n$ is used to indicate communication pairs, where $a_{ij} = 1$ if $(i, j) \in \mathcal{E}$, and $a_{ij} = 0$ otherwise. The degree matrix $\Delta(\mathcal{G}) = \mathrm{diag}(d_1, \ldots, d_n)$ is a diagonal matrix where each diagonal element represents the sum of the elements in the $i$-th row (or column) of $\mathcal{A}(\mathcal{G})$, thus $d_i = \sum_{j} a_{ij} = |\mathcal{N}_i|$.

The Laplacian matrix is defined as $\mathcal{L}(\mathcal{G}) = \Delta(\mathcal{G}) - \mathcal{A}(\mathcal{G})$.  This matrix always has an eigenvalue of 0, corresponding to the eigenvector from the family spanned by $\mathbf{1}_N$.

\begin{defn}
For a connected graph $\mathcal{G} =(\mathcal{V}, \mathcal{E})$ with the Laplacian matrix $\mathcal{L}$, the general algebraic connectivity is defined by 
\begin{equation}
	\alpha(\mathcal{L})=\min\limits_{x^T \xi =0, x\neq=0} \frac{x^T \mathcal{L} x}{x^T x}, \quad  \xi = 1_p.
\end{equation}
Furthermore $\alpha(\mathcal{L})=\lambda_2(\mathcal{L})$, where $\lambda_2(\mathcal{L})$ is the smallest positive eigenvalue of the Laplacian matrix $\mathcal{L}$.
\end{defn}

\subsection{Problem statement}
Consider the general linear dynamics of each agent which is described by
\begin{equation}\label{eq-2-1}
\dot{x}_i (t)=Ax_i (t)+B(u_i (t)+\Phi_i(t,x_i (t))\theta_i (t)), \ i=1,....,n 
\end{equation}
where $x_i\in\mathbb{R}^p$ is the state of agent $i$, $u_i\in\mathbb{R}^q$ is the control function, $A\in\mathbb{R}^{p\times{p}}$ and $B\in\mathbb{R}^{p\times{q}}$ are constant matrices, $\Phi_i\in\mathbb{R}^{q\times{m}}$ is a bounded continuous function and $\theta_i\in\mathbb{R}^m$ is an unknown constant parameter.

We are interested in constructing a feedback control for the system \eqref{eq-2-1} that achieves the asymptotic consensus.

We assume the following condition for $(A,B)$.
\begin{ass} The pair $(A,B)$ is stabilizable. 
\end{ass}

This problem was studied in the paper \cite{MRS} where the authors proposed the following adaptive control:
\begin{equation}\label{eq-2-10}
\begin{split}
&u_i (t)=\alpha{K}\sum^n_{j=1}a_{ij}(x_i (t)-x_j (t))-\Phi_i(t,x_i)\hat{\theta}_i (t).\\
&\dot{\hat{\theta}}_i (t)= \Phi^T_iB^TP\sum^n_{j=1}a_{ij}(x_i (t)- x_j (t)).
\end{split}
\end{equation}
where $\alpha>0$ and $K=-B^{T}P \in \mathbb{R}^{q\times p}$. Here $P \in \mathbb{R}^{p\times p}$ is selected as a positive symmetric matrix solving the following Algebraic Riccati Equation:
\begin{equation}
	A^{T}P+PA-PBB^{T}P+I_p=0.
\end{equation}
 
Then \eqref{eq-2-1} can be written as
\begin{equation}\label{eq-3-1}
	\dot{x}_i (t)=Ax_i (t)+B\bigg[\alpha{K}\sum^n_{j=1}a_{ij}(x_i (t)-x_j (t))-\Phi_i(t,x_i(t))\tilde{\theta}_i (t)\bigg]
\end{equation}
where $\tilde{\theta}_i (t)=\hat{\theta}_i (t)-\theta_i$.

We can write this in a vectorial form as
\begin{equation}
	\dot{x}(t) = (I_n\otimes{A})x (t)+ \alpha(\mathcal{L}\otimes{BK})x (t)- (I_n\otimes{B})\Phi\tilde\Theta (t)\\.
\end{equation}
It was proved in \cite{MRS} that the system \eqref{eq-2-1} with control \eqref{eq-2-10} achieves the consensus as the time goes to infinity. The result guarantees the consensus but the convergence rate is missing, which is important in practical scenarios. The aim of this paper is to propose an adaptive controller for the system \eqref{eq-2-1} that may guarantee the exponential convergence rate.  For this we make use of the concurrent learning developed in the work \cite{CJ}.

\medskip

\noindent \text{\textbf{Condition 1.}} For each $i \in \{1,2,\cdots, m\}$, the matrix $Z_i=[\Phi(x_{i,1}),....,\Phi(x_{i,p})]$ has $rank(Z_i)=m$.

\medskip

Precisely, we propose the following update rule for the uncertain variable:
\begin{equation}\label{eq-2-2}
\dot{\hat{\theta_i}}(t) = \Phi^T_iB^TP\sum^n_{j=1}a_{ij}(x_i (t)- x_j (t)) - \sum^r_{k=1}\Phi_i^T(x_{i,k})\Phi_i(x_{i,k})\tilde\theta_i (t),
\end{equation}
where $\tilde{\theta}_i (t)=\hat{\theta}_i (t)-\theta_i$.

 We will show that the control \eqref{eq-2-12} along with \eqref{eq-2-2} achieves the consensus for the system \eqref{eq-2-1} exponentially fast. 

\begin{thm}\label{thm-1}Assume that the general linear dynamics of each agent\ is described by \eqref{eq-2-1} and Condition 1 holds.
Set 
\begin{equation}\label{eq-3-2}
\dot{\hat{\theta_i}}(t) = \Phi^T_iB^TP\sum^n_{j=1}a_{ij}(x_i (t)- x_j (t)) - \sum^r_{k=1}\Phi_i^T(x_{i,k})\Phi_i(x_{i,k})\tilde\theta_i (t).
\end{equation}
Then the dynamics \eqref{eq-2-1} is exponentially stable.
\end{thm}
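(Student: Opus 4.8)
The plan is to construct a single Lyapunov function that couples the consensus error and the parameter estimation error, and to show it decays exponentially. First I would pass to the consensus variables by projecting $x(t)$ onto the subspace orthogonal to $\mathbf{1}_n \otimes I_p$; write $\delta(t) = (M \otimes I_p) x(t)$ where $M = I_n - \frac{1}{n}\mathbf{1}_n\mathbf{1}_n^T$ (or equivalently work directly with the disagreement vector $x_i - \bar x$). Using the vectorial form $\dot x = (I_n\otimes A)x + \alpha(\mathcal{L}\otimes BK)x - (I_n\otimes B)\Phi\tilde\Theta$ and $K = -B^TP$, the consensus dynamics become $\dot\delta = (I_n\otimes A)\delta - \alpha(\mathcal{L}\otimes BB^TP)\delta - (M\otimes B)\Phi\tilde\Theta$, where on the range of $M\otimes I_p$ the operator $\mathcal{L}$ is bounded below by $\alpha(\mathcal{L}) = \lambda_2(\mathcal{L}) > 0$.

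Next I would propose the candidate $V(t) = \delta^T(I_n\otimes P)\delta + \sum_{i=1}^n \tilde\theta_i^T \tilde\theta_i$ (possibly with a tunable weight on the second term). Differentiating along trajectories and using the Riccati equation $A^TP + PA - PBB^TP + I_p = 0$, the quadratic-in-$\delta$ terms produce $-\delta^T(I_n\otimes I_p)\delta$ plus a term $-2\alpha\,\delta^T(\mathcal{L}\otimes PBB^TP)\delta \le 0$ that one may either keep or discard; the cross terms from $-(M\otimes B)\Phi\tilde\Theta$ in $\dot\delta$ and from the first (standard adaptive) term $\Phi^T B^T P(\cdots)$ in $\dot{\tilde\theta}_i$ are designed to cancel, exactly as in the classical model-reference adaptive argument of \cite{MRS}. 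What remains from the concurrent-learning term $-\sum_{k=1}^r \Phi_i^T(x_{i,k})\Phi_i(x_{i,k})\tilde\theta_i$ is $-2\sum_i \tilde\theta_i^T \big(\sum_{k=1}^r \Phi_i^T(x_{i,k})\Phi_i(x_{i,k})\big)\tilde\theta_i = -2\sum_i \tilde\theta_i^T (Z_i Z_i^T)\tilde\theta_i$ in the appropriate notation. Under Condition~1, $Z_iZ_i^T$ is positive definite, so this term is bounded above by $-2\lambda_{\min}\sum_i |\tilde\theta_i|^2$ with $\lambda_{\min} = \min_i \lambda_{\min}(Z_iZ_i^T) > 0$.

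Putting these together gives $\dot V \le -\delta^T\delta - 2\lambda_{\min}\sum_i|\tilde\theta_i|^2 \le -c\,V$ for $c = \min\{1/\lambda_{\max}(P),\ 2\lambda_{\min}\}$, and Grönwall's inequality then yields $V(t) \le V(0)e^{-ct}$, which gives exponential decay of both $\delta(t)$ (hence consensus) and $\tilde\theta_i(t)$ (hence parameter estimation), establishing exponential stability. I would close by noting that this also bounds $\hat\theta_i(t)$, so all signals stay bounded and the formal differentiation is justified.

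The main obstacle I anticipate is handling the cross terms cleanly. The adaptive feedback term in $\dot{\hat\theta}_i$ involves $\sum_j a_{ij}(x_i - x_j)$, which in stacked form is $(\mathcal{L}\otimes I_p)x = (\mathcal{L}\otimes I_p)\delta$ (since $\mathcal{L}\mathbf{1}_n = 0$), and one must verify that the term $+2\sum_i \tilde\theta_i^T \Phi_i^T B^T P [(\mathcal{L}\otimes I_p)\delta]_i$ from $\dot{\tilde\theta}^T\tilde\theta$ exactly cancels the term $-2\delta^T(\mathcal{L}\otimes PB)\Phi\tilde\Theta$ coming from $\delta^T(I_n\otimes P)\dot\delta$ — checking the transpose/symmetry bookkeeping and the placement of $\mathcal{L}$ is where errors are likely to creep in. A secondary subtlety is that the concurrent-learning regressor samples $x_{i,k}$ and the notation $Z_i = [\Phi(x_{i,1}),\ldots,\Phi(x_{i,p})]$ should be reconciled with the sum $\sum_{k=1}^r$ in \eqref{eq-3-2}; I would state precisely that $\sum_{k=1}^r \Phi_i^T(x_{i,k})\Phi_i(x_{i,k}) = Z_iZ_i^T$ is positive definite under Condition~1, which is the only property actually used.
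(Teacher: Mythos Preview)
Your overall strategy is the right one and very close to the paper's, but the cross-term cancellation you rely on does \emph{not} hold with your choice of Lyapunov function, and this is exactly the ``placement of $\mathcal{L}$'' issue you flagged. With $V=\delta^T(I_n\otimes P)\delta+\sum_i\tilde\theta_i^T\tilde\theta_i$ and $\dot\delta=(I_n\otimes A)\delta-\alpha(\mathcal{L}\otimes BB^TP)\delta-(M\otimes B)\Phi\tilde\Theta$, the cross term produced by $\delta^T(I_n\otimes P)\dot\delta+\dot\delta^T(I_n\otimes P)\delta$ is
\[
-2\,\delta^T(M\otimes PB)\Phi\tilde\Theta \;=\; -2\,\delta^T(I_n\otimes PB)\Phi\tilde\Theta,
\]
since $(M\otimes I_p)\delta=\delta$; there is no $\mathcal{L}$ here. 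On the other hand, the adaptive law contributes
\[
+2\,\tilde\Theta^T\Phi^T(\mathcal{L}\otimes B^TP)\delta,
\]
because $\sum_j a_{ij}(x_i-x_j)$ is the $i$-th block of $(\mathcal{L}\otimes I_p)x=(\mathcal{L}\otimes I_p)\delta$. The two do not cancel: you are left with the sign-indefinite residual $2\,\tilde\Theta^T\Phi^T\big((\mathcal{L}-I_n)\otimes B^TP\big)\delta$, which you cannot absorb without further structure. So the claimed inequality $\dot V\le -\delta^T\delta-2\lambda_{\min}\sum_i|\tilde\theta_i|^2$ is not established.

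The paper's fix is precisely to weight the state part of the Lyapunov function by the Laplacian: it takes
\[
V(t)=x(t)^T(\mathcal{L}\otimes P)x(t)+\tfrac{1}{2}\sum_{i=1}^n\tilde\theta_i^T\tilde\theta_i,
\]
which is the same as $\delta^T(\mathcal{L}\otimes P)\delta+\tfrac12\sum_i\|\tilde\theta_i\|^2$. Now the cross term from the state part is $-2x^T(\mathcal{L}\otimes PB)\Phi\tilde\Theta$, which \emph{does} match the adaptive-law contribution and cancels cleanly. The Riccati identity then yields $-x^T(\mathcal{L}\otimes I_p)x - x^T\big((2\alpha\mathcal{L}^2-\mathcal{L})\otimes PBB^TP\big)x$ for the quadratic-in-state piece; one takes $\alpha\ge 1/(2\lambda_2(\mathcal{L}))$ so that $2\alpha\mathcal{L}^2-\mathcal{L}\succeq 0$ and bounds $x^T(\mathcal{L}\otimes I_p)x\ge \gamma\,x^T(\mathcal{L}\otimes P)x$ with $\gamma=\lambda_2(\mathcal{L})/\lambda_{\max}(P)$. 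Together with Condition~1 for the concurrent-learning term this gives $\dot V\le -\min\{\gamma,q\}V$, exactly the exponential decay you were aiming for. Your handling of the concurrent-learning term via $Z_iZ_i^T\succ 0$ is correct; only the state-weight in $V$ needs to be changed from $I_n\otimes P$ to $\mathcal{L}\otimes P$.
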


\subsection{Quantized communcation}
As a benefit of our framework, we extend our analysis for the case that the communication is done by a quantization, which is more practical for real-world scenarios. Let us consider a uniform quantizer $q_u:R\rightarrow\sigma Z$ in this paper, which can be defined by 
\begin{equation}\label{eq-2-3}
q_u(x)=\lfloor \frac{x}{\sigma} + \frac{1}{2} \rfloor \sigma
\end{equation}
where $\sigma$ is a positive number. We let
\begin{equation}
q_u(x)-x=\Delta_u(x).
\end{equation}
Note that $\vert q_u(x)-x \vert \leq (\frac{\sigma}{2})$, which reflects accuracy of quantizer. We can write this in a vectorial form as
\begin{equation}
\Vert q_u(x)-x \Vert \leq n(\frac{\sigma}{2})
\end{equation}
for $x=[x_1, x_2,..., x_n]^T$ and $q_u(x)=[q_u(x_1), q_u(x_2),..., q_u(x_n)]^T$.\\

We consider the following control
\begin{equation}\label{eq-2-12} 
u_i (t)= \alpha{K}\sum^n_{j=1}a_{ij}(q_u(x_i (t)) - q_u(x_j (t))) - \Phi_i(t,x_i (t))\hat\theta_i (t),
\end{equation}
which is companied with the following update rule:
\begin{equation}\label{eq-2-4}
\dot{\hat{\theta}}_i  (t)= \Phi^T_iB^TP\sum^n_{j=1}a_{ij}(q_u(x_i (t)) - q_u(x_j (t))) - \sum^r_{k=1}\Phi_i^T(x_{i,k})\Phi_i(x_{i,k})\tilde\theta_i (t).
\end{equation}
We will show that the control \eqref{eq-2-12} with the update rule \eqref{eq-2-4} achieves the consensus up to an error $O(\sigma)$ exponentially fast for the system \eqref{eq-2-1}.

 .

\

\section{Main Result}\label{section3}

In this section, we propose a consensus control for the system \eqref{eq-2-1} involving the concurrent learning. For the proposed control \eqref{eq-3-2}, we establish that the controlled system is exponentially stable.
 
\begin{proof}[Proof of Theorem \ref{thm-1}]We consider the following Lyapunov function 
\begin{equation}
V(t)=x(t)^T(L_A\otimes{P})x(t)+\frac{1}{2}\sum^n_{i=1}tr(\tilde\theta^T_i\tilde\theta_i)(t).
\end{equation}
Differentiating this, we find
\begin{equation}\label{eq-3-20}
\begin{split}
\dot{V}(t)&=\dot{x(t)}^T(L_A\otimes{P})x(t)+ x(t)^T(L_A\otimes{P})\dot{x(t)} + \frac{1}{2}\sum^n_{i=1}tr(\dot{\tilde{\theta_i}}^T\tilde{\theta_i} + \tilde{\theta_i}^T \dot{\tilde{\theta_i}})\\
&=x(t)^T(L_A \otimes A^T P)x(t) + \alpha x(t)^T(L_A^2 \otimes K^TB^TP)x(t) -\tilde{\Theta}^T \Phi^T(L_A \otimes B^TP)x(t) \\
&\quad +x(t)^T(L_A \otimes PA)x(t) + \alpha x(t)^T(L_A^2 \otimes PBK)x(t) -x(t)^T(L_A\otimes PB)\Phi \tilde{\Theta} + \sum^n_{i=1}tr(\tilde{\theta_i}^T\dot{\tilde{\theta_i}})\\
&=x(t)^T(L_A \otimes( AP^T+PA))x(t) +\alpha x(t)^T (L_A^2 \otimes (K^TB^TP + PBK))x(t)\\
&\quad -2\tilde{\Theta}^T \Phi^T (L_A \otimes B^TP)x(t) +\sum^n_{i=1}tr(\tilde{\theta_i}^T\dot{\tilde{\theta_i}})\\
&= - x(t)^T(L_A\otimes{I_P})x(t) - x(t)^T[(2\alpha{L^2_A} - L_A)\otimes{PBB^TP}]x(t)  
\\
&\qquad\quad + \sum^n_{i=1}tr\left[\tilde\theta^T_i\bigg(\dot{\hat{\theta_i}} - \Phi^T_iB^TP\sum^n_{j=1}a_{ij}(x_i (t) - x_j (t))\bigg)\right],
\end{split}
\end{equation}
where we used \eqref{eq-3-1} in the second equality.
Inserting \eqref{eq-3-2} into the above formula, we get
\begin{equation}\label{eq-3-5}
\begin{split}
\dot{V}(t) &= - x(t)^T\{(L_A\otimes{I_P}) + [(2\alpha{L^2_A} - L_A)\otimes{PBB^TP}]\}x(t)
\\
&\quad -\sum^n_{i=1}tr\left[\tilde\theta^{T}_{i}\bigg(\sum^r_{j=1}\Phi_i^T(x_{i,k})\Phi_i(x_{i,k})\bigg)\tilde\theta_i\right].
\end{split}
\end{equation}
By Condition 1, there exists a positive value $q_i$ such that 
\begin{equation}
\sum^p_{j=1}\Phi_i^T(x_{i,k})\Phi_i(x_{i,k})\geq q_i \cdot I_m
\end{equation}
and so we have
\begin{equation}
\sum^n_{i=1}tr\left[\tilde\theta^T_{i}\bigg(\sum^p_{j=1}\Phi_i^T(x_{i,k})\Phi_i(x_{i,k})\bigg)\tilde\theta_i\right] \geq q\sum^n_{i=1}tr(\tilde\theta^T_i\tilde\theta_i),
\end{equation}
where $q = \min_{1\leq i \leq n} q_i$. For $\alpha \geq \frac{1}{2 \min_{\mu_i \neq 0}\mu_i}$, the matrix $2\alpha L_A^2 -L_A$ is positive semi-definite.   In addition, we have
\begin{equation}\label{eq-3-6}
\begin{split}
x(t)^T (L_A \otimes P) x(t) & = (x(t)-\bar{x(t)})^T (L_A \otimes P) (x(t)-\bar{x(t)})
\\
&\leq C \|x(t)-\bar{x(t)}\|^2
\\
&\leq (C/\alpha (L_A)) (x(t)-\bar{x(t)})^T (L_A \otimes I_p) (x(t)-\bar{x(t)})
\\
& =(1/\gamma) x(t)^T (L_A \otimes I_p) x(t),
\end{split}
\end{equation}
where we have let $\gamma = \alpha (L_A)/C$.  Therefore we have
\begin{equation}\label{eq-3-7}
x(t)^T\{(L_A\otimes{I_P}) + [(2\alpha{L^2_A} - L_A)\otimes{PBB^TP}]\}x(t) \geq \gamma x(t)^T(L_A\otimes{P})x(t).
\end{equation} 
Using above estimates in \eqref{eq-3-5}, we get
\begin{equation}
\begin{split}
\dot{V}(t)&\leq-\gamma x(t)^T\{(L_A\otimes{P})x(t) -q\sum^n_{i=1}tr(\tilde\theta^T_{i}\tilde\theta_{i}) (t)\\
&\leq-min\{\gamma,q\}V(t).\\
\end{split}
\end{equation}
It implies that $V(t) \leq e^{-t \cdot min\{\gamma,q\}} V(0)$, estabilishing the exponential consensus of the system.
\end{proof}

\section{Quantized version}

In this section, we study the system \eqref{eq-2-1} with control \eqref{eq-2-12} involving the quantization operator. Due to the discontinuity of the quantization operator, the right-hand side of the controlled dynamic equation contains a discontinuous term. Hence we need to consider the Krasovskii solution as in the work \cite{ZZHW}.
 
Consider the following differential equation
\begin{equation}\label{discon}
\dot{x}(t)=f(x(t)), \ x(t_0)=x_0
\end{equation}
with the discontinuous right hand side, where $f(\cdot):\mathbf{R}^n\rightarrow\mathbf{R}^n$ is a discontinuous function. Because of the discontinuity of \eqref{discon}, a classical solution does not work.  

We say that $x:[t_0,t_0 +a] \rightarrow \mathbf{R}^n$ for a value $a>0$ solves the equation \eqref{discon} in the Krasovskii sense if $x( \cdot)$ is absolutely continuous and for almost every time $t\in [t_0, t_0 +a]$ satisfies the following differential inclusion   
\begin{equation}
\dot{x}(t) \in \mathcal{K}[f(x(t))]\triangleq\cap_{v>0}\bar{co}f(O(x,v))
\end{equation}
where $\bar{co}$ denotes the convex closure, $f(O(x, v))$ is assessed at the points within $O(x, \nu)$, which represents the open ball centered at $x$ with radius $\nu$. It is known that a local Krasovskii solution of equation \eqref{discon} exists if $f(\cdot) : \mathbb{R}^n \rightarrow \mathbb{R}^n$ is measurable and locally bounded.

Next, we consider the case that the communication between agents of \eqref{eq-2-1} involves the quantization.

\begin{thm}\label{thm-2} (Quantized version) Consider the general linear dynamics \eqref{eq-2-1} with the control \eqref{eq-2-12} involving the quantization with level $\sigma >0$ defined as

\begin{equation}\label{eq-2-3}
q_u(x)=\lfloor{x/\sigma} + 1/2\rfloor\sigma
\end{equation}
and set
\begin{equation}\label{eq-2-4}
\dot{\hat{\theta_i}}(t) = \Phi^T_iB^TP\sum^n_{j=1}a_{ij}(q_u(x_i) - q_u(x_j)) - \sum^p_{j=1}\Phi_i^T(x_{i,k})\Phi_i(x_{i,k})\tilde\theta_i (t)\\
\end{equation}
Then the dynamics \eqref{eq-2-1} with uniform quantizer \eqref{eq-2-3} is exponentially stable up to an $O(\sigma)$-error.
\end{thm}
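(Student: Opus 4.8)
The plan is to run the Lyapunov argument of Theorem~\ref{thm-1} essentially verbatim and absorb the quantization as an $O(\sigma)$ additive perturbation. I keep the same Lyapunov function
\[
V(t)=x(t)^T(L_A\otimes P)x(t)+\tfrac12\sum_{i=1}^n\mathrm{tr}\big(\tilde\theta_i^T\tilde\theta_i\big)(t),
\]
which is a quadratic, hence $C^1$, function of $(x,\tilde\Theta)$; consequently, along any Krasovskii solution (which is absolutely continuous) the map $t\mapsto V$ is absolutely continuous and for a.e.\ $t$ one has $\dot V(t)=\nabla V\cdot v$ for some selection $v$ of the differential inclusion. Writing $q_u(x)=x+\Delta_u(x)$, each coordinate of $\Delta_u(x)$ lies in $[-\sigma/2,\sigma/2]$; since the Krasovskii regularization of $q_u$ only forms convex combinations of values of $q_u$ at nearby points, every selection still obeys $\|\Delta_u(x)\|\le c_0\sigma$. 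In vectorial form the closed loop reads
\[
\dot x=(I_n\otimes A)x+\alpha(L_A\otimes BK)x+\alpha(L_A\otimes BK)\Delta_u(x)-(I_n\otimes B)\Phi\tilde\Theta,
\]
together with $\dot{\tilde\theta}_i=\Phi_i^TB^TP\,[L_A(x+\Delta_u(x))]_i-\sum_{k}\Phi_i^T(x_{i,k})\Phi_i(x_{i,k})\tilde\theta_i$.

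Next I would differentiate $V$ exactly as in \eqref{eq-3-20}--\eqref{eq-3-5}. The terms that do not involve $\Delta_u$ reproduce the computation in the proof of Theorem~\ref{thm-1}, and by \eqref{eq-3-6}--\eqref{eq-3-7} together with Condition~1 they contribute at most $-\mu V(t)$, where $\mu=\min\{\gamma,q\}>0$. The remaining terms are linear in $\Delta_u(x)$, a finite sum of expressions of the form $\alpha\,x^T(L_A^2\otimes PBK)\Delta_u(x)$ and $\tilde\Theta^T\Phi^T(L_A\otimes B^TP)\Delta_u(x)$. Using $x^TL_A^2=(x-\bar x)^TL_A^2$, the boundedness of $\Phi$, and Cauchy--Schwarz, these are bounded in modulus by $C_1\sigma\|x(t)-\bar x(t)\|+C_2\sigma\|\tilde\Theta(t)\|$; and since $\|x-\bar x\|^2\le\alpha(L_A)^{-1}x^T(L_A\otimes I_p)x$ and $\|\tilde\Theta\|^2\le 2V$, the whole perturbation is at most $C_3\sigma\sqrt{V(t)}$, with $C_3$ depending only on $A,B,P,L_A,\alpha$ and $\sup\|\Phi\|$.

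Altogether $\dot V(t)\le-\mu V(t)+C_3\sigma\sqrt{V(t)}$ for a.e.\ $t$. Young's inequality $C_3\sigma\sqrt V\le\tfrac{\mu}{2}V+\tfrac{C_3^2}{2\mu}\sigma^2$ gives $\dot V\le-\tfrac{\mu}{2}V+\tfrac{C_3^2}{2\mu}\sigma^2$, and Gronwall's inequality then yields
\[
V(t)\le e^{-\mu t/2}V(0)+\frac{C_3^2}{\mu^2}\sigma^2 .
\]
Hence $\|x(t)-\bar x(t)\|$ and $\|\tilde\Theta(t)\|$ decay exponentially to a ball of radius $O(\sigma)$, which is the asserted exponential stability up to an $O(\sigma)$-error; the fact that $V$ stays bounded also gives global existence of the Krasovskii solution. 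The main obstacle I anticipate is the nonsmooth bookkeeping rather than the estimates: one has to justify the chain rule for $V$ along the differential inclusion and confirm that the set-valued quantization error preserves $\|\Delta_u\|\le c_0\sigma$ for every selection, so that everything collapses to a genuine perturbation of the proof of Theorem~\ref{thm-1}; after that the computation is routine.
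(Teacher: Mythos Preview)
Your proposal is correct and follows essentially the same route as the paper: the same Lyapunov function, the same splitting into the Theorem~\ref{thm-1} computation plus an $O(\sigma)$ perturbation coming from $\Delta_u$, and the same Young/Gronwall endgame yielding $\dot V\le -cV+C\sigma^2$. The only cosmetic difference is that the paper confines the quantization error to the $x$-part (their term $E(t)=-x^T(2\alpha L_A^2\otimes PBB^TP)v$, bounded by $\alpha D\|x-\bar x\|\,\|v\|$ and absorbed using half of $-x^T(L_A\otimes I_p)x$), whereas you also carry a $\tilde\Theta$--$\Delta_u$ cross term and bound everything uniformly by $C_3\sigma\sqrt{V}$.
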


\begin{proof}Inserting \eqref{eq-2-12} into \eqref{eq-2-1}, we find
\begin{equation}
\begin{split} 
\dot{x_i}(t) &= Ax_i(t)+B[\alpha{K}\sum^n_{j=1}a_{ij}(q_u(x_i (t))-q_u(x_j (t))) - \Phi_i\tilde\theta_i (t)]\\ 
&= Ax_i(t)+B[\alpha{K}\sum^n_{j=1}a_{ij}(x_i (t) + \vartriangle_i (t) - x_j (t) - \vartriangle_j (t)) - \Phi_i\tilde\theta_i (t)].\\
\end{split}
\end{equation}
where $\vartriangle_i (t)= q_u(x_i (t)) - x_i (t)$. This equation admits a Krasovskii solution $(x_1 (t), \cdots, x_n (t)) \in \mathbb{R}^n$ such that
\begin{equation}
\dot{x_i}(t) \in Ax_i (t)+ B[\alpha{K}\sum^n_{j=1}a_{ij}(x_i (t) - x_j (t)) - \Phi_i\tilde\theta_i (t)] + B\alpha{K}\sum^n_{j=1}a_{ij}(\mathcal{K}[\vartriangle_i](t) - \mathcal{K}[\vartriangle_j](t)),
\end{equation}
and so  there are values $v_i (t) \in \mathcal{K}[\vartriangle_i](t)$ and $v_j (t)\in \mathcal{K}[\vartriangle_j](t)$ such that
\begin{equation}
\dot{x_i}(t) = Ax_i (t) + B\Big(\alpha{K}\sum^n_{j=1}a_{ij}(x_i (t) - x_j (t)) - \Phi_i\tilde\theta_i (t)\Big) + B\alpha{K}\sum^n_{j=1}a_{ij}(v_i (t)- v_j (t))\\.
\end{equation}
We write this in a vectorial form as follows
\begin{equation}\label{eq-4-1}
\dot{x}(t) = (I\otimes{A})x (t)+ \alpha(L_A\otimes{BK})x(t) + \alpha(L_A\otimes{BK})v(t) - (I\otimes{B})\Phi\tilde\Theta (t)\\.
\end{equation}
Now we consider the Lyapunov function $V:[0,\infty) \rightarrow [0,\infty)$ given by
\begin{equation}
V(t)=x(t)^T(L_A\otimes{P})x(t)+\frac{1}{2}\sum^n_{i=1}tr(\tilde\theta^T_i\tilde\theta_i)(t)\\
\end{equation}
By differentiating this,
\begin{equation}
\dot{V}(t)=\dot{x}(t)^T(L_A\otimes{P})x(t) + x(t)^T(L_A\otimes{P})\dot{x}(t) + \frac{1}{2}\sum^n_{i=1}tr(\dot{\tilde{\theta_i}}^T\tilde{\theta_i} + {\tilde{\theta_i}}^T\dot{\tilde{\theta_i}})(t)\\
\end{equation}
We let
\begin{equation}
Q(t) =\dot{x}(t)^T(L_A\otimes{P})x(t) + x(t)^T(L_A\otimes{P})\dot{x}(t)\\
\end{equation}
Using \eqref{eq-4-1} and following \eqref{eq-3-20}  we find
\begin{equation}
Q(t) =- x(t)^T(L_A\otimes{I_P})x(t) - x(t)^T[(2\alpha{L^2_A} - L_A)\otimes{PBB^TP}]x(t) + E(t)
\end{equation}
where
\begin{equation}
E (t) =\alpha v(t)^T(L_A\otimes{BK})^T(L_A\otimes{P})x(t) + \alpha x(t)^T(L_A\otimes{P})(L_A\otimes{BK})v(t).\\
\end{equation}
We note that $E(t)$ involves the term $v$ which comes from the quantization error.  Using $K= -B^T P$ we reformulate $E(t)$ as follows
\begin{equation}
\begin{split}
E(t)&=  \alpha v(t)^T(L_A^2\otimes{K^TB^TP})x(t) + \alpha x^T(L_A^2\otimes{PBK})v(t)\\
&= - x(t)^T(2\alpha{L_A}^2\otimes{PBB^TP})v(t).\\
\end{split}
\end{equation}
 Combining the above computations, we have
\begin{equation}
\begin{split}
\dot{V}(t)=&- x(t)^T(L_A\otimes{I_P})x(t) - x^T[(2\alpha{L^2_A} - L_A)\otimes{PBB^TP}]x(t)+E(t)\\
&+ \sum^n_{i=1}tr\left[\tilde\theta^T_i\bigg(\dot{\hat{\theta_i}} - \Phi^T_iB^TP\sum^n_{j=1}a_{ij}(q_u(x_i (t)) - q_u(x_j (t)))\bigg)\right].\\
\end{split}
\end{equation}
Inserting \eqref{eq-2-4} here, we get
\begin{equation}\label{eq-4-7}
\begin{split}
\dot{V}(t) &= - x(t)^T\{(L_A\otimes{I_P}) + [(2\alpha{L^2_A} - L_A)\otimes{PBB^TP}]\}x(t) +E(t)\\ 
&\quad- \sum^n_{i=1}tr\left[\tilde\theta{_i}{^T}\bigg(\sum^p_{j=1}\Phi_i^T(x_{i,k})\Phi_i(x_{i,k})\bigg)\tilde\theta_i\right].\\
\end{split}
\end{equation}
We proceed to estimate the right hand side of the above inequality. As in \eqref{eq-3-7} we have
\begin{equation}
\begin{split}
x(t)^T\{(L_A\otimes{I_P}) + &[(2\alpha{L^2_A} - L_A)\otimes{PBB^TP}]\}x(t)\geq \gamma x(t)^T (L_A ^2 \otimes PBB^T P)x(t)
\end{split}
\end{equation}
and by Condition 1 we have
\begin{equation}
\sum^n_{i=1}tr\left[\tilde\theta^T_i\bigg(\sum^p_{j=1}\Phi_i^T(x_{i,k})\Phi_i(x_{i,k})\bigg)\tilde\theta_i\right] \geq  q\sum^n_{i=1}tr(\tilde\theta^T_i\tilde\theta_i).
\end{equation}
We note that 
\begin{equation}
\begin{split}
E(t) &= (x(t) - \bar{x}(t))^T (2\alpha L_A^2 \otimes PBB^T P) v(t)
\\
&\leq  \alpha D \|x(t) - \bar{x}(t)\| \|v(t)\|,
\end{split}
\end{equation}
where we have let $D = \| L_A^2 \otimes PBB^T P\|$. Also we the inequalities in \eqref{eq-3-6} to find
\begin{equation}
\begin{split}
x(t)^T (L_A \otimes I_p) x(t)& \geq \frac{\gamma}{2} x(t)^T (L_A \otimes P) x(t) + \frac{\alpha (L_A)}{2}\|x(t) -\bar{x}(t)\|^2.
\end{split}
\end{equation}
We can formulate the inequality such that 
\begin{equation}\label{eq5}
\begin{split}
\dot{V}(t) \leq &-\frac{\gamma}{2} x(t)^T (L_A  \otimes P)x(t) -  \frac{\alpha (L_A)}{2}\|x(t) -\bar{x}(t)\|^2
\\
&\quad - q\sum^n_{i=1}tr(\tilde\theta^T_i\tilde\theta_i)(t) +\alpha D \|x(t) - \bar{x}(t)\| \|v(t)\|.
\end{split}
\end{equation}
Using Young's inequality here, we get
\begin{equation}
\begin{split}
\dot{V}(t) \leq &-\frac{\gamma}{2} x(t)^T (L_A  \otimes P)x(t) - q\sum^n_{i=1}tr(\tilde\theta^T_i\tilde\theta_i )(t)+ J   \|v(t)\|^2,
\\
&  \leq -min\{\gamma/2,q\}V(t) +  J \|v(t)\|^2. 
\end{split}
\end{equation} 
where $J = \alpha^2 D^2 / \alpha (L_A)$.  It implies that
\begin{equation}
\begin{split}
V(t)  & \leq  e^{-t \cdot min\{\gamma/2,q\}} V(0) + \int_0^{t} e^{-\min\{\gamma/2, q\}(t-s)} J \|v(s)\|^2 ds
\\
&< e^{-t \cdot min\{\gamma/2,q\}} V(0) + \frac{J \sigma^2}{\min \{\gamma/2, q\}}.
\end{split}
\end{equation}
This shows that the system achieves the consensus up to an $O(\sigma)$-error with an exponential rate. The proof is done.
\end{proof}

\section{Simulation}\label{section5}
In this section, we present numerical tests supporting the theoretical results of this paper. In the dynamics \eqref{eq-2-1}, we set $p=4$ and $q=2$, and choose the matrices $A\in \mathbf{R}^{p\times p}$ and $B\in \mathbf{R}^{p\times q}$ as follows:
\begin{equation*}
	A = 
	\begin{pmatrix}
		-0.8& 0.3 & 0.2 & 1.1 \\
		0.4 & -0.5 & 1.2 & 0.6 \\
		0.7 & 0.9 & 0.2 & 0.5\\
		1.3 & 1.1 & 0.4 & -0.1 
	\end{pmatrix}
	,B=
	\begin{pmatrix}
		1.2& 0.7 \\
		0.6 & 1.3\\
		1.1 & 1.4\\
		0.9& 1.2 
	\end{pmatrix}.
\end{equation*}
This pair $(A,B)$ is stabilizable. We consider the undirected multi-agent system with 5 agents which exchange information via a communication graph depicted in Figure \ref{com.graph}. 
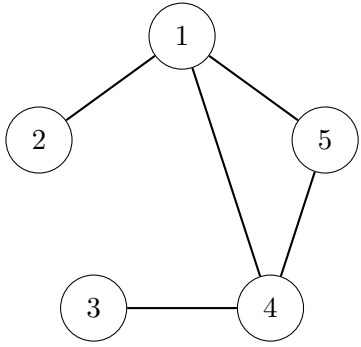
\begin{figure}[h]
	\centering
	\begin{tikzpicture}
		\tikzstyle{vertex} = [circle, draw, minimum size=25pt, inner sep=0pt]
		
		\node[vertex] (1) at (90:2) {1};
		\node[vertex] (2) at (162:2) {2};
		\node[vertex] (3) at (234:2) {3};
		\node[vertex] (4) at (306:2) {4};
		\node[vertex] (5) at (18:2) {5};
		
		\draw[thick] (1) -- (2);
		\draw[thick] (1) -- (5);
		\draw[thick] (4) -- (5);
		\draw[thick] (3) -- (4);
		\draw[thick] (1) -- (4);
	\end{tikzpicture}
	\caption{Undirected communication graph}
	\label{com.graph}
\end{figure}

We use the same Laplacian matrix and the nonlinear function $\Phi$ as in \cite{CSHD}. Specifically, the Laplacian matrix $L\in \mathbf{R}^{5\times 5}$ is given by
\begin{equation*}
	\mathcal{L}=
	\begin{pmatrix}
		2.168 & -1.037 & 0 & -0.865 & -0.266\\
		-1.037&1.037&0&0&0\\
		0&0&1.651&-1.651&0\\
		-0.865&0&-1.651&2.863&-0.347\\
		-0.266&0&0&-0.347&0.613\\
	\end{pmatrix},
\end{equation*}
and the solution of Algebraic Riccati Equation $P\in\mathbf{R}^{4\times4}$ is 
\begin{equation*}
	P=
	\begin{pmatrix}
		2.8917 & -0.3741 & -1.8010 & 1.2765\\
		-0.3741 & 0.5278 & 0.3487 & -0.0738\\
		-1.8010 & 0.3487 & 2.1217 & -1.0746\\
		1.27656 & -0.0738 & -1.0746 & 1.1882
	\end{pmatrix}.
\end{equation*}
We choose $q=4$ and $m=1$ in \eqref{eq-2-1} and the nonlinear function $\Phi_d\in \mathbf{R}^2$ for $d=\{1,2,3,4,5\}$ is defined as
\begin{equation*}
	\Phi_d(x_d(t),t)=
	\begin{pmatrix}
		\gamma_d + \beta_d e^{-d}x_{d1}\\
		\gamma_d + \beta_d e^{-d}x_{d2}
	\end{pmatrix}
\end{equation*}
where
\begin{equation*}
	\begin{split}
		\gamma&=(\gamma_1, \gamma_2, \gamma_3, \gamma_4, \gamma_5)
		=
		\begin{pmatrix}
			0.4157 & 0.4017 & 0.0302 & 0.1996 & 0.2634
		\end{pmatrix}\\
		\beta&=(\beta_1, \beta_2, \beta_3, \beta_4, \beta_5)
		=
		\begin{pmatrix}
			0.3437 & 0.5474 & 0.5233 & 0.2433 & 0.3597
		\end{pmatrix}.
	\end{split}
\end{equation*}
The references of parameters are set as
\begin{equation}
	\theta = (\theta_1, \ \theta_2, \ \theta_3, \ \theta_4, \ \theta_5 )=
	\begin{pmatrix}
		3 & 6 & 1.5 & 5.5 & 0.5	
	\end{pmatrix}.
\end{equation}
The initial states of the agents for the simulation are given by
\begin{equation}
	\begin{split}
		x_1(0) &= [-6.125, \ 4.375, \ 9.875, \ -8.125]^T \\
		x_2(0) &= [1.0, \ -8.5, \ -2.5, \ 10.0]^T \\
		x_3(0) &= [-5.0, \ 7.5, \ -3.5, \ 1.0]^T \\
		x_4(0) &= [10.125, \ -3.875, \ -2.875, \ -3.375]^T \\
		x_5(0) &= [1.125, \ 0.125, \ -0.875, \ -0.375]^T
	\end{split}
\end{equation}
The initial values of the variable $\{\hat\theta_i(t)\}^5_{i=1}$ targeting $\theta$ are
\begin{equation}
\hat\theta(0) = (\hat\theta_1(0), \hat\theta_2(0), \ \hat\theta_3(0), \ \hat\theta_4(0), \ \hat\theta_5(0))=
\begin{pmatrix}
1 & 1 & 3 & 2 & 5	
\end{pmatrix}.	
\end{equation}
We simulate \eqref{eq-2-1} involving the control given by \eqref{eq-2-10}. with $\alpha=0.8019$ and $K=-B^TP$.
The graph of consensus error $\sum_{i,j \in N} ||x_i(t) - x_j(t) ||^2$ and the graphs of the values $\hat\theta(t)$ are depicted in Figure \ref{ep}, which indicates that the consensus achieved with an exponential rate. Also it shows that $\hat\theta(t)$ converges to $\theta$ exponentially fast as proved in Theorem \ref{thm-1}.  Figure \ref{c} exhibits the graphs of the first and third variables of the state of the agents, revealing that the consensus is acheived for those coordinates.\\
\begin{figure}[h]
	\centering
	\begin{minipage}{0.45\textwidth}
		\centering
		\includegraphics[width=\textwidth]{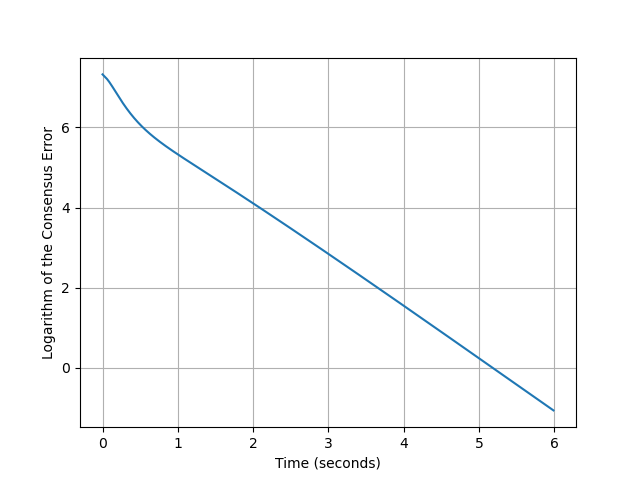}
		%\label{CE}
	\end{minipage}
	\hfill
	\begin{minipage}{0.45\textwidth}
		\centering
		\includegraphics[width=\textwidth]{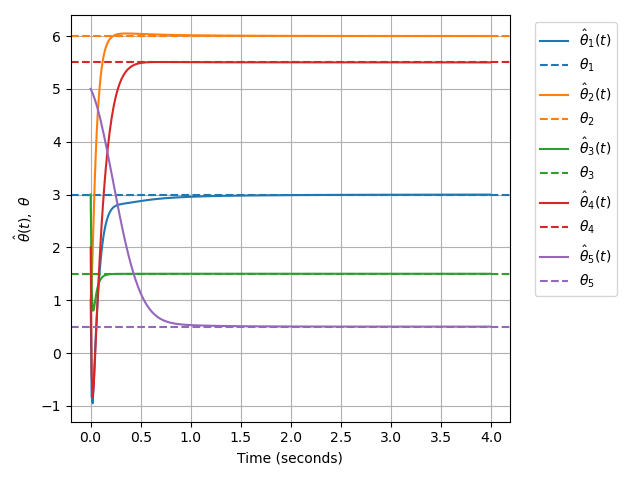}
		%\label{PE}
	\end{minipage}
	\caption{Consensus error of states and parameter estimation.}
	\label{ep}
\end{figure}

\begin{figure}[h]
	\centering
	\begin{minipage}{0.45\textwidth}
		\centering
		\includegraphics[width=\textwidth]{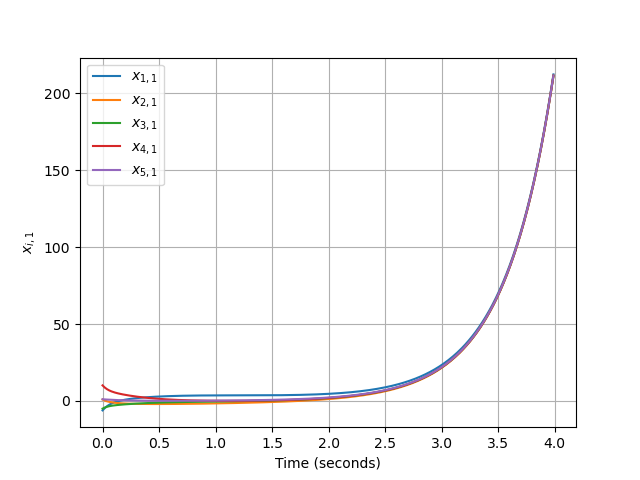}
		%\label{CX1}
	\end{minipage}
	\hfill
	\begin{minipage}{0.45\textwidth}
		\centering
		\includegraphics[width=\textwidth]{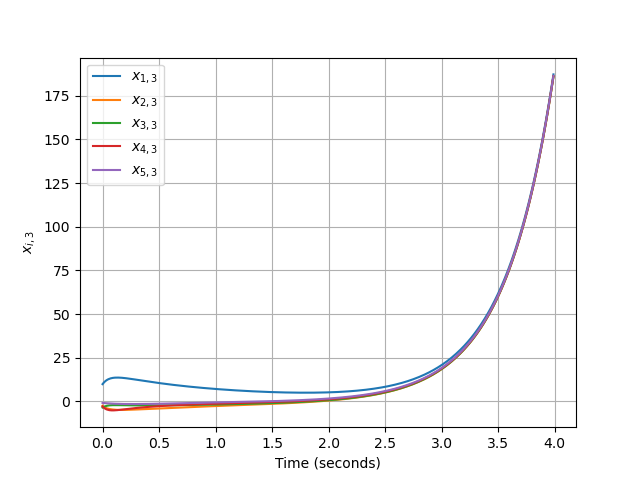}
		%\label{CX3}
	\end{minipage}
	\caption{Consensus of $x_{i,1}$ and $x_{i,3}$.}
	\label{c}
\end{figure}

Next we test the control \eqref{eq-2-12} involving the quantization operator. We set $\sigma=5$ for the quantization level in \eqref{eq-2-3}. The graph of consensus error and the convergence of parameters to references with quantization are presented in Figure \ref{epq} as we proved in Theorem \ref{thm-2}. It shows that the consensus and parameter estimation are acheived up to an error. The graphs of the first and third coordinates are depicted in Figure \ref{cq}. 

\begin{figure}[h]
	\centering
	\begin{minipage}{0.45\textwidth}
		\centering
		\includegraphics[width=\textwidth]{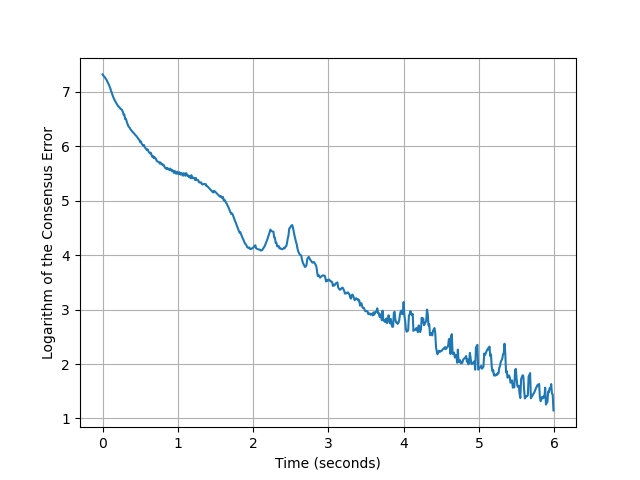}
		%\label{CEQ}
	\end{minipage}
	\hfill
	\begin{minipage}{0.45\textwidth}
		\centering
		\includegraphics[width=\textwidth]{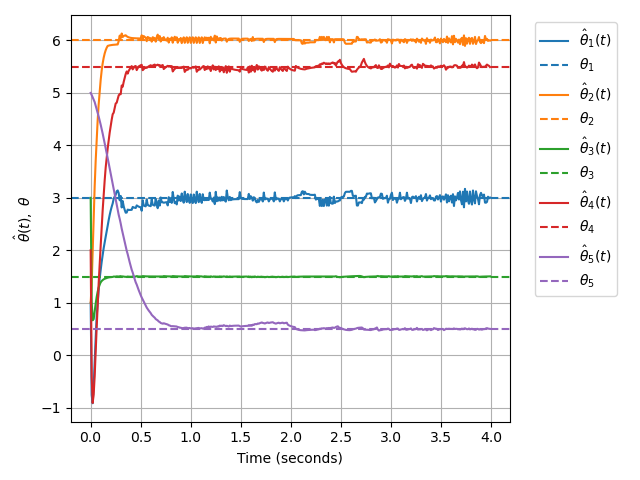}
		%\label{PEQ}
	\end{minipage}
	\caption{Consensus error and parameter estimation with quantization $\sigma=5$.}
	\label{epq}
\end{figure}

\begin{figure}[h]
	\centering
	\begin{minipage}{0.45\textwidth}
		\centering
		\includegraphics[width=\textwidth]{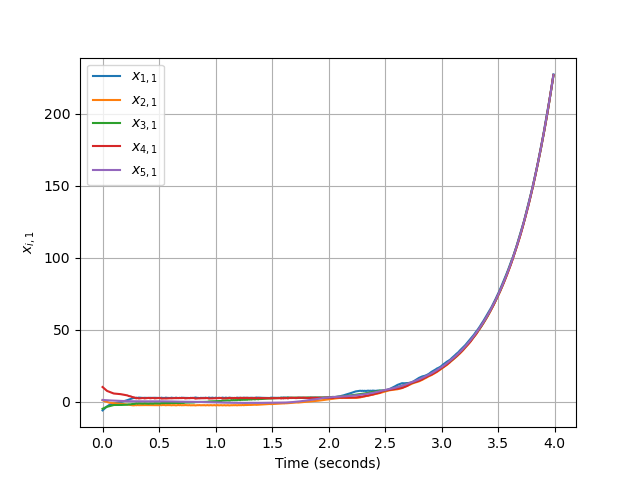}
		%\label{CX2q}
	\end{minipage}
	\hfill
	\begin{minipage}{0.45\textwidth}
		\centering
		\includegraphics[width=\textwidth]{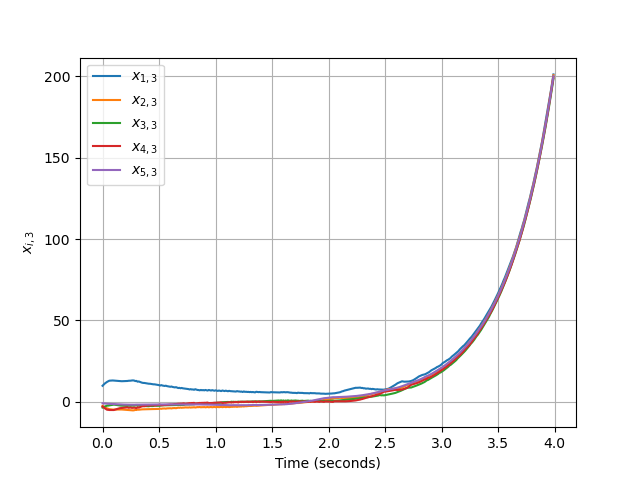}
		%\label{CX4q}
	\end{minipage}
	\caption{Consensus of $x_{i,1}$ and $x_{i,3}$ with quantization $\sigma=5$.}
	\label{cq}
\end{figure}

Additionally, we test the dynamic \eqref{eq-2-1} with control \eqref{eq-2-12} for different levels of quantization $\sigma = \{10,15\}$. The consensus error and parameter estimation with quantization level $\sigma=10$ are presented in Figure \ref{ep10} and Figure \ref{ep15} depicts the case with $\sigma=15$. Figure \ref{ep10} and Figure \ref{ep15} show that the quantity $\sum_{i,j \in N} ||x_i(t) - x_j(t) ||^2$ converges exponentially fast up to an error whose size is proportional to the size of $\sigma >0$ and the parameter estimation are acheived up to a small error.

\begin{figure}[h]
	\centering
	\begin{minipage}{0.45\textwidth}
		\centering
		\includegraphics[width=\textwidth]{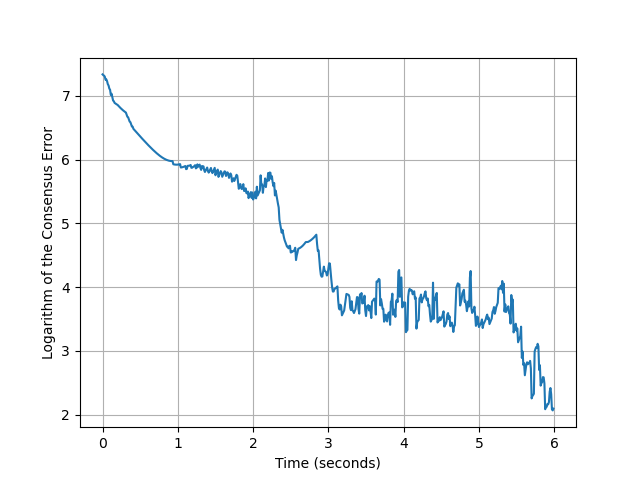}
		%\label{10_e}
	\end{minipage}
	\hfill
	\begin{minipage}{0.45\textwidth}
		\centering
		\includegraphics[width=\textwidth]{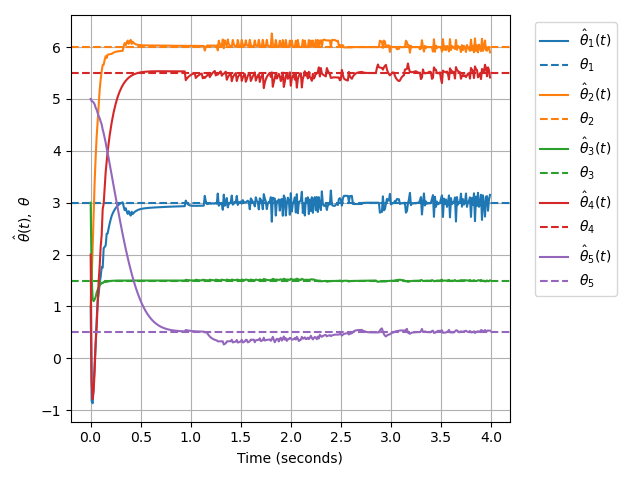}
		%\label{10_p}
	\end{minipage}
	\caption{Consensus error and parameter estimation with quantization $\sigma = 10$.}
	\label{ep10}
\end{figure}

\begin{figure}[h]
	\centering
	\begin{minipage}{0.45\textwidth}
		\centering
		\includegraphics[width=\textwidth]{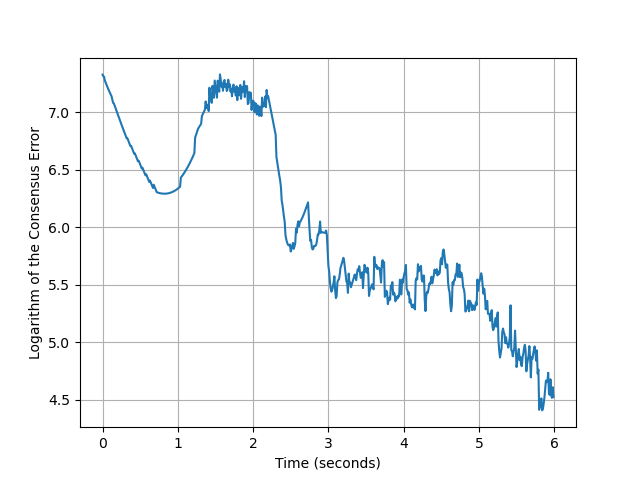}
		%\label{15_e}
	\end{minipage}
	\hfill
	\begin{minipage}{0.45\textwidth}
		\centering
		\includegraphics[width=\textwidth]{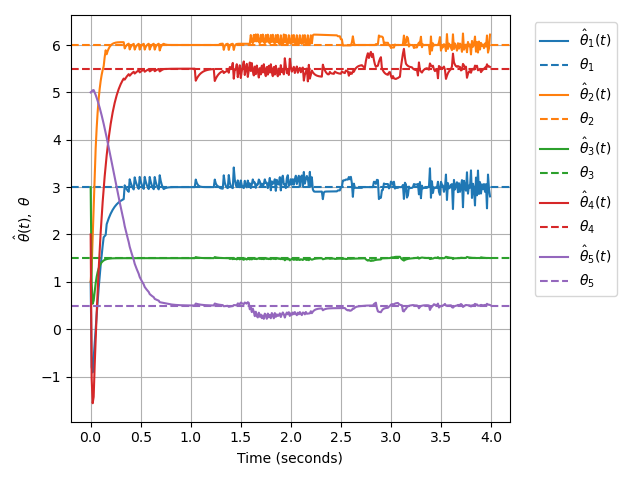}
		%\label{15_p}
	\end{minipage}
	\caption{Consensus error and parameter estimation with quantization $\sigma = 15$.}
	\label{ep15}
\end{figure}

\section{Conclusion}
In this study, we demonstrated that by introducing the concurrent learning technique, it is possible to achieve exponential convergence in adaptive consensus algorithms. By leveraging historical data, concurrent learning enhances parameter estimation accuracy and control performance. Additionally, we applied this technique in scenarios involving a uniform quantizer. The effectiveness of this approach is further validated by simulation results.

\newpage

%\begin{thebibliography}{20}

%%%%%%%%%%

%\bibitem{YHH} D. Yuan, Y. Hong, D. W. C. Ho, and G. Jiang, Optimal distributed stochastic mirror descent for strongly convex optimization, Automatica, {\bf90}, pp. 196--203, Apr. 2018.

%\end{thebibliography} 

\end{document}